\documentclass[12pt,a4paper,english]{article}

\newcommand{\beq}{\begin{equation}}
\newcommand{\eeq}{\end{equation}}

\usepackage[latin1]{inputenc}
\usepackage{babel}
\usepackage[centertags]{amsmath}
\usepackage{amsfonts}
\usepackage{amssymb}
\usepackage{color}
\usepackage{amsthm}
\usepackage{newlfont}
\usepackage{graphicx}
\usepackage{dsfont}
\usepackage{geometry}
\usepackage{mathrsfs}
\usepackage{hyperref}

\setlength{\textheight}{24cm}
\setlength{\textwidth}{16cm}
\setlength{\voffset}{-1cm}
\setlength{\hoffset}{-0,85cm}

\newtheorem{theorem}{Theorem}[section]

\newtheorem{coroll}[theorem]{Corollary}
\newtheorem{prop}[theorem]{Proposition}

\newtheorem{remark}[theorem]{Remark}
\newtheorem{ass}[theorem]{Assumption}

\newcommand{\msc}[1]{\textbf{MSC2010 Classification:} #1.}
\newcommand{\keywords}[1]{\textbf{Key words:} #1.}

\def\vs{\vspace}
\def\hs{\hspace}

\def\EE{\mathsf E}
\def\PP{\mathsf P}
\def\CC{\mathcal C}
\def\DD{\mathcal D}
\def\eps{\varepsilon}
\def\cF{{\cal F}}

\pagestyle{myheadings}
\markright{ \rm A note on the continuity of free-boundaries}

\makeatletter  
\@addtoreset{equation}{section}

\makeatother  
\setcounter{section}{0}
\setcounter{equation}{0}

\begin{document}
\title{\textbf{A note on the continuity of free-boundaries \\ in finite-horizon optimal stopping problems \\ for one dimensional diffusions}\footnote{This work was partially supported by EPSRC grant EP/K00557X/1.}}
\author{Tiziano De Angelis\thanks{School of Mathematics, University of Manchester, Oxford Rd.~M13 9PL Manchester, UK; \texttt{tiziano.deangelis@manchester.ac.uk}}}
\maketitle
\begin{abstract}
We provide sufficient conditions for the continuity of the free-boundary in a ge\-ne\-ral class of finite-horizon optimal stopping problems arising for instance in finance and economics. The underlying process is a strong solution of one dimensional, time-homogeneous stochastic differential equation (SDE). The proof relies on both analytic and probabilistic arguments and it is based on a contradiction scheme inspired by the maximum principle in partial differential equations (PDE) theory. Mild, local regularity of the coefficients of the SDE and smoothness of the gain function locally at the boundary are required. 
\end{abstract}
\msc{60G40, 60J60, 35R35, 35K20}
\vspace{+8pt}

\noindent\keywords{optimal stopping, one dimensional diffusions, free-boundary problems, continuous free-boundaries, second-order linear parabolic PDEs}
\section{Introduction}
In this work we provide some sufficient conditions for the continuity of optimal stopping boundaries in a class of optimal stopping problems of the form
\begin{align}\label{intr01}
\sup_{0\le \tau\le T-t}\EE_x\Big[G\big(t+\tau,X_\tau\big)\Big],
\end{align}
where $0<T<+\infty$ and the supremum is taken over stopping times of a Markov process $X$. The gain function $G$ is real valued and $X$ is the unique strong solution of a time-homogeneous, one dimensional stochastic differential equation (SDE). We require mild regularity on the coefficients of the SDE and some regularity properties of $G$ locally at the optimal boundary. In principle $G$ may even be discontinuous at points not on the optimal boundary without altering validity of our results.

It is well known that a \emph{link} exists between optimal stopping problems in probability theory and free-boundary problems in partial differential equations (PDE) theory.
For a general exposition of analytical and probabilistic results related to this topic one may refer, to \cite{BL82} and \cite{Fri73}, among others. Also, a probabilistic approach to optimal stopping and free-boundary problems may be found, for instance, in \cite{Shir} (mainly Markovian setting and infinite time-horizon) and \cite{Pes-Shir} (both Markovian setting and martingale methods with finite/infinite time-horizon) and references therein.

A particular class of problems that has been attracting intense studies for more than 40 years is the one in which $X$ is real valued and the free boundary is a curve $\{b(t),\,t\in[0,T]\}$ depending only on time. In these cases it is possible and often interesting to analyse the regularity of the map $t\mapsto b(t)$.

Properties of the free-boundary have been studied thoroughly by means of PDE me\-thods in a large number of cases related to the Stefan problem. There exists a huge literature in this area and listing a complete set of references is a demanding task that falls outside the scopes of this work (some insights may be found for instance in \cite{CDH67}, \cite{Sch76} and references therein). On the other hand, several works on stochastic control problems are partially devoted to the analysis of continuity and differentiability of the free-boundary by means of variational methods (cf.~\cite{BF77} and \cite{Fri75}, among others); in \cite{Fri75} for instance the author proves free-boundary's differentiability for a particular class of problems and shows its continuity in wide generality (cf.~Section 6 of that work).

Methods involving both analytical and probabilistic tools where originally developed by \cite{Kot73}, \cite{McK65} and \cite{VM}, among others, where differentiability of the free-boundary $b$ in the open interval $(0,T)$ was proven under suitable assumptions on $X$ and $G$. In \cite{McK65} it was also shown that $b$ solves a countable system of non-linear integral equation but the problem of uniqueness of such solution was left open. Another integral equation for $b$ and its first derivative $b^\prime$ was obtained in \cite{VM}; however, a full proof of existence and local uniqueness of a solution pair $(b\,,b^\prime)$ was only given in the case of $b^\prime$ bounded on $[0,T]$. This condition is somewhat restrictive in general and does not hold for instance in the famous example of the American put option. The latter has received significant attention due to its strongly applicative nature and the regularity of the associated free-boundary was analysed carefully (cf.~\cite{Bl06}, \cite{CC07}, \cite{CCJZ08} and \cite{Eks04}, among others, for an overview of known results in that setting). At the beginning of the 1990s an integral equation for the optimal boundary (with no derivatives) was derived independently by \cite{CJM92}, \cite{Jacka} and \cite{KIM} but, as it was pointed out in \cite{Myn92}, the question of uniqueness was left open at that time. More than ten years later it was proven in \cite{Pe05} that the free-boundary is the unique solution in the class of \emph{continuous} functions to this integral equation.

It seems then natural from the standpoint of optimal stopping to investigate continuity properties of the free-boundary. In fact, in a very large class of examples, if continuity is established \emph{a priori}, one may rely upon on an extension of It\^o's formula (cf.\ \cite{Pe05b} or \cite{Pe07} for a detailed exposition) to find an integral equation for $b$; uniqueness of its solution in the class of continuous functions may be then proven by developing arguments as in \cite{Pe05} (see \cite{Pes-Shir} and the references therein). This characterises the free-boundary unambiguously and the value $V$ of \eqref{intr01} may be expressed as a functional of $b$ itself.

A proof of continuity generally requires techniques based on \emph{ad hoc} arguments that have to be found on a case by case basis.  In fact, in optimal stopping literature this is usually obtained by an application of Newton-Leibnitz formula, combined with the so-called \emph{smooth-fit} property (i.e.~the fact that $V(t,\,\cdot\,)$ is $C^1$ across the optimal boundary) and estimates on $V$ obtained \emph{ex ante} (see \cite{Pes-Shir} for a list of examples).


Although Newton-Leibnitz formula turns out to be a suitable tool to deal with most of the examples that we could find in literature, we observed that some cases seem quite hard to tackle this way (cf.~for instance \cite{Ch-Haus09}, \cite{DeA-Fe13}, \cite{YYZ12} or \cite{CDeAV13}; in particular in \cite{DeA-Fe13} one may find applications of results of this work to zero-sum optimal stopping games). In fact, some difficulties may arise when one or more of the following facts occur: $i)$ $V$ is not convex/concave with respect to the space variable, $ii)$ the explicit expression of the process $X$ is unknown or the coefficients of its infinitesimal generator are non-trivial and make some estimates rather difficult, $iii)$ the gain function underlying the optimal stopping problem is non-differentiable or it is explicitly time-dependent, $iv)$ the free-boundary is non-monotone.

The purpose of this work is to provide an alternative proof of the free-boundary's continuity partly based on local regularity of $G$ at the boundary and partly based on properties of $V$ which are generally obtainable via probabilistic arguments. To the best of our knowledge all examples where continuity has already been established meet requirements of our setting. It might be worth noticing that smooth-fit condition is not needed in our proofs.

The rest of the paper is organised as follows. In Section 2 we introduce the optimal stopping problem, some standard assumptions and a list of conditions that will be used only when needed in different proofs of free-boundary's continuity. We take $X$ as the unique strong solution of a time-homogeneous SDE in $\mathbb{R}$ with locally Lipschitz coefficients, we assume that a free-boundary exists and we make some mild regularity assumptions on the gain function $G$ locally at the boundary. These assumptions are mainly in the spirit of a probabilistic approach to optimal stopping, rather than a PDE one. In Section 3 we prove that the free-boundary of an optimal stopping problem of this kind is continuous in all intervals where it is either increasing or decreasing (increasing here means $b(t_1)\le b(t_2)$, for $t_1\le t_2$). Proofs are provided for two different settings: firstly, we assume that $V$ has a local modulus of continuity; secondly, we replace that assumption by a suitable integrability condition on $G$ and extend continuity of $b$ to a setting in which $V$ is only continuous. A contradiction scheme and arguments inspired by the maximum principle are combined with probabilistic estimates to obtain the results. The work is completed by an alternative proof of the free-boundary's continuity in the special case of time independent gain functions.

\section{Setting and Assumptions}
Consider a complete probability space $(\Omega,\cF,\PP)$ equipped with the natural filtration $\mathbb{F}:=(\cF_t)_{t\ge 0}$ generated by a one dimensional, standard Brownian motion $B:=(B_t)_{t\ge0}$. Assume that the filtration is completed with $\PP$-null sets and it is therefore continuous. Without loss of generality we may consider $\Omega=C([0,\infty))$, i.e.~the canonical space of continuous trajectories and $\PP$ the Wiener measure; then $B_t=\omega(t)$ coincides with the coordinate mapping.

Results of this work hold for diffusions with state space in an arbitrary open subset $\mathcal{O}$ of $\mathbb{R}$ and such that conditions below hold in $\mathcal{O}$ (a simple example is the Geometric Brownian motion and  $\mathcal{O}:=(0,+\infty)$). It should be noticed that all proofs of Section 3 may be easily repeated in case of a diffusion taking values in $\overline{\mathcal{O}}$ provided that the free-boundary does not intersect $\partial\mathcal{O}$. Alternatively, if a portion of the free-boundary coincides with a portion of $\partial\mathcal{O}$, the analysis becomes more delicate and one may have to take into account for the behaviour of the diffusion at $\partial\mathcal{O}$. However, bearing this in mind we take $\mathcal{O}=\mathbb{R}$ to simplify the exposition.

Let functions $\mu:\mathbb{R}\to\mathbb{R}$ and $\sigma:\mathbb{R}\to\mathbb{R}_+$ be such that
\begin{itemize}
\item[(\textbf{A.1})]\hs{+20pt}\emph{$\mu$ and $\sigma$ are locally Lipschitz, $\mu$ is piecewise-$C^2$, $\sigma>0$ and it is piecewise-$C^3$.}
\end{itemize}
Denote by $X:=(X_t)_{t\ge0}$ the time-homogeneous real process that uniquely solves
\begin{align}\label{SDE}
dX_t=\mu(X_t)dt+\sigma(X_t)dB_t,\qquad X_0=x
\end{align}
in the strong sense. We denote by $\PP_x$ the probability measure induced by $X$ started at time zero from $x$.

Fix $T>0$ and take $G:[0,T]\times\mathbb{R}\to\mathbb{R}$ such that
\begin{itemize}
\item[(\textbf{A.2})]\hs{+115pt}$G$ \emph{is upper semi-continuous.} 
\end{itemize}
We can now define a general optimal stopping problem with value function given by
\begin{align}\label{Vfun}
V(t,x):=\sup_{0\le \tau\le T-t}\EE_x\Big[G(t+\tau,X_\tau)\Big],
\end{align}
where the supremum is taken over all $\mathbb{F}$-stopping times in $[0,T-t]$.

In many cases of interest one may verify that
\begin{itemize}
\item[(\textbf{A.3})]\hs{+110pt}\emph{$V$ is continuous on $(0,T)\times\mathbb{R}$}
\end{itemize}
and the stopping time
\begin{align}\label{taustar}
\tau_*(t,x):=\inf\big\{s\ge0\,:\,V(t+s,X_s)=G(t+s,X_s)\big\}\wedge(T-t)\quad\textrm{under $\PP_x$}
\end{align}
is optimal for \eqref{Vfun}.
The state space is then naturally split into a \emph{continuation set} $\CC:=\big\{V>G\big\}$ and a \emph{stopping set} $\DD:=\big\{V=G\big\}$ which are an open and a closed subset of $[0,T]\times\mathbb{R}$, respectively.

Define the infinitesimal generator $\mathbb{L}_X$ of $X$ by
\begin{align}\label{infgen}
\mathbb{L}_X f(x):=\frac{\sigma^2(x)}{2}f^{\prime\prime}(x)+\mu(x)f^\prime(x)\qquad\textrm{for $f\in C^2_b(\mathbb{R})$}.
\end{align}
From standard Markovian arguments and with no further assumptions one obtains that $V\in C^{1,2}$ inside $\CC$ and it solves the free-boundary problem
\begin{align}\label{freeb-pr}
\begin{array}{cl}
\vs{+5pt}
V_t+\mathbb{L}_XV=0 & \textrm{in $\CC$}\\
\vs{+5pt}
V_t+\mathbb{L}_XV\le 0 & \textrm{in $[0,T]\times\mathbb{R}$}\\
\vs{+5pt}
V\ge G & \textrm{in $[0,T]\times\mathbb{R}$}\\
\vs{+5pt}
V=G & \textrm{in $\DD\cup\{T\}\times\mathbb{R}$.}
\end{array}
\end{align}

We make now a basic existence assumption for the free-boundary.
\begin{ass}\label{assfreeb}
There exists a free-boundary $\{b(t),\,0\le t\le T\}$ such that
\begin{align}\label{cont-stop}
\CC:=\big\{x\in\mathbb{R}\,:\,x>b(t),\:t\in[0,T)\big\}\:\:\:\:\&\:\:\:\:\DD:=\big\{x\in\mathbb{R}\,:\,x\le b(t),\:t\in[0,T)\big\}\cup\{T\}\times\mathbb{R}.
\end{align}
\end{ass}
\noindent Although in general $G$ is only USC on $[0,T]\times\mathbb{R}$, one may verify that in numerous examples a stronger local regularity holds at points of the boundary (cf.~\cite{Pes-Shir} for an overview). Therefore we may consider $G$ fulfilling (\textbf{A.2}) and such that
\begin{itemize}
\item[(\textbf{A.4})]\emph{For any $t\in[0,T)$, there exists $\eps>0$, a ball $\mathcal{B}:=\mathcal{B}^{\eps}_{t,b(t)}$ centered in $(t,b(t))$ and with radius $\eps$ such that $G\in C^{1,2}(\overline{\mathcal{B}\cap\CC})$.}
\end{itemize}

Now we list conditions that will be used in Section 3 below to prove continuity of the free-boundary in different settings. It is important to stress that we will not employ all of them at the same time and that they will be recalled only when needed. The next two conditions are useful to show continuity of the free-boundary when it is increasing or decreasing, respectively.
\begin{itemize}
\item[(\textbf{C.1})] \emph{There exist $\eps>0$ and $\mathcal{B}$ as in (\textbf{A.4}) such that
\begin{align}\label{regG-bdry}
H:= G_t+\mathbb{L}_XG\le-\ell_\eps\quad\textrm{in $\overline{\mathcal{B}\cap\CC}$}
\end{align}
with $\ell_\eps>0$ a suitable constant.}
\end{itemize}

\begin{itemize}
\item[(\textbf{C.2})]\emph{
There exist $\eps>0$ and $\mathcal{B}$ as in (\textbf{A.4}) such that $G_{tx}$ and $G_{xxx}$ exist and are continuous in $\mathcal{B}\cap\CC$. Moreover, there exists a constant $\ell^\prime_\eps>0$ such that
\begin{align}\label{regH-bdry}
H_x:= \frac{\partial}{\partial x}\big(G_{t}+\mathbb{L}_XG\big)\ge\ell^\prime_\eps\quad\textrm{in $\overline{\mathcal{B}\cap\CC}$}
\end{align}
and
\begin{align}\label{VxGx}
V_x\ge G_x\quad\textrm{in $\mathcal{B}\cap\CC$}.
\end{align}}
\end{itemize}

\begin{remark}
Assumption \ref{assfreeb} is not binding. In fact, results of this work extend to the case of a free-boundary $\{c(t),\,0\le t\le T\}$ such that
\begin{align}\label{cont-stop2}
\hs{-5pt}\CC:=\big\{x\in\mathbb{R}\,:\,x<c(t),\:t\in[0,T)\big\}\:\:\:\:\&\:\:\:\:\DD:=\big\{x\in\mathbb{R}\,:\,x\ge c(t),\:t\in[0,T)\big\}\cup\{T\}\times\mathbb{R}.
\end{align}
Conditions $(\textbf{C.1})$ and $(\textbf{C.2})$ (with $\ell^\prime_\eps<0$ and reverse inequalities in \eqref{regH-bdry} and \eqref{VxGx} if \eqref{cont-stop2} holds) are instead crucial, as it will be shown in a counterexample below. Suitable extension to the case of multiple free-boundaries as for instance in \cite{DeA-Fe13}, \cite{DuT-Pe07} and \cite{YYZ12} may be obtained with minor modifications.
\end{remark}

Condition (\textbf{A.4}), (\textbf{C.1}), (\textbf{C.2}) and Assumption \ref{assfreeb} are mostly in the spirit of a probabilistic approach to optimal stopping problems rather than a PDE one. In fact, existence of a free-boundary as in \eqref{cont-stop}, as well as the structure of $\CC$ and $\DD$, may be proven quite often by means of techniques borrowed from stochastic calculus (again we refer the reader to \cite{Pes-Shir} for a list of examples). On the other hand, condition (\textbf{A.4}) and \eqref{regG-bdry} mean, roughly speaking, that the free-boundary lies in a portion of the $(t,x)$-plane where $G$ is smooth and it is locally not convenient to wait. Note that \eqref{regG-bdry} is stronger than the usual condition
\begin{align}
\DD&\subset\big\{(t,x)\,:\,G_t+\mathbb{L}_X G\le 0\big\}\qquad\textrm{for $G\in C^{1,2}$}\label{LGneg}
\end{align}
since it requires strict inequality. Local smoothness of $G$ at the boundary has been observed, for example, in problems related to mathematical finance (cf.~\cite{Jacka} or \cite{PeUys05}, among others). 
Conditions (\textbf{A.4}) and (\textbf{C.2}) are closely related as well. In fact regularity of $V$ in $\CC$, (\textbf{A.4}) and Assumption \ref{assfreeb} imply \eqref{VxGx}. On the other hand, if $G\in C^{1,2}$ and \eqref{regH-bdry} holds in $[0,T]\times\mathbb{R}$ one may show that Assumption \ref{assfreeb} and \eqref{VxGx} hold as well (cf.~for instance \cite{JL92}, Theorem 4.3).

As already mentioned conditions \eqref{regG-bdry} and \eqref{regH-bdry} are crucial in our proofs. In fact, continuity of the boundary may break down if we omit one of them. There is for instance an interesting counterexample in which at some points on the boundary $\ell_\eps=\ell^\prime_\eps\equiv0$ for all $\eps>0$. In Remark 15, at the end of \cite{Cox-Pe13}, authors analyse a particular optimal stopping problem for Brownian motion which is relevant to the study of Skorokhod embedding (cf.~eq.~(5.23) therein). The continuation set and the stopping set are se\-pa\-ra\-ted by discontinuous optimal boundaries (this was also shown in \cite{McCon91} with a different approach). The gain function has the particular form $G(x):=|x|-2\int^x_0{F(y)dy}$, where $F$, in principle, is not even $C^1$ (for simplicity we will consider $F\in C^1$). It was shown in \cite{Cox-Pe13}, Proposition 7, that discontinuities for a right-continuous, increasing boundary only happen at particular points $t_0$ such that $F^\prime(x)\equiv0$ for $x\in(b(t_0),b(t_0+))$. Equivalently, discontinuities for a right-continuous, decreasing boundary only happen for $t^\prime_0$ such that $F^\prime(x)\equiv0$ for $x\in(c(t^\prime_0+),c(t^\prime_0))$. However, for $t_0$ and $t^\prime_0$ as above one has $\mathbb{L}_XG(x)=\frac{1}{2}G_{xx}(x)=-F^\prime(x)\equiv0$ on $(b(t_0),b(t_0+))\setminus\{0\}$ and $(c(t^\prime_0+),c(t^\prime_0))\setminus\{0\}$ implying that both $\ell_\eps$ and $\ell^\prime_\eps$ in \eqref{regG-bdry} and \eqref{regH-bdry} cannot be larger than zero.


\begin{remark}
Once the existence of a free-boundary is proven one would like to add to \eqref{freeb-pr} the so-called \emph{smooth-fit} condition, i.e.~$V_x(t,b(t)+)=G_x(t,b(t)-)$, $t\in[0,T)$. This can hardly be done by probabilistic methods when an explicit solution of \eqref{SDE} is not known. However, under some additional assumptions on $\mu$ and $\sigma$ one could rely on results about variational inequalities and Sobolev embedding theorems (cf.~for instance \cite{Fri73} and \cite{Fri75}) to retrieve this further condition. For the purpose of this exposition the smooth-fit is not necessary, hence we will not discuss it here.
\end{remark}

We now introduce two conditions, each of which is sufficient, together with (\textbf{C.1}), to prove continuity of increasing free-boundaries. The first one is on the regularity of the value function \eqref{Vfun} and it is stronger than $(\textbf{A.3})$ but it may be verified in several examples of interest (cf.\ for instance \cite{DuT-Pe07} and \cite{Jacka}).
\begin{itemize}
\item[(\textbf{C.3})] \emph{The value function fulfils $(\textbf{A.3})$ with a local modulus of continuity. In particular there exists $\alpha>0$ and continuous functions $\theta_i:\mathbb{R}_+\to\mathbb{R}_+$ for $i=1,2$ such that
\begin{align}\label{Vcont}
\big|V(t+h,x+h^\prime)-V(t,x)\big|\le \theta_1(|x|)|h|^{\alpha/2}+\theta_2(|t|)|h^\prime|^\alpha
\end{align}
for $(t,x)\in[0,T-h]\times\mathbb{R}$ and $h,h^\prime>0$.}
\end{itemize}
\noindent Sufficient conditions for (\textbf{C.3}) to hold with $\theta_1=\theta_2\equiv const.$~are that $\mu,\sigma$ in \eqref{SDE} are Lipschitz and $|G(t_1,x_1)-G(t_2,x_2)|\le C(|t_2-t_1|^{\alpha/2}+|x_2-x_1|^\alpha)$ for a fixed constant $C>0$.

The second condition is very similar to a usual sufficient condition for the well-posedness of the optimal stopping problem (cf.\ for instance \cite{Pes-Shir}, Section 2.2).
\begin{itemize}
\item[(\textbf{C.4})] \emph{There exists $\delta>1$ such that the map $(t,x)\mapsto \kappa(t,x)$ defined by
\begin{align}\label{integr-G}
\kappa(t,x):=\EE_x\Big[\sup_{0\le s\le T-t}\big|G(t+s,X_s)\big|^\delta\Big]
\end{align}
satisfies
\begin{align}
\sup_{0\le t\le T}\int_{-R}^R\big|\kappa(t,x)\big|^{\frac{1}{\delta}}dx<\infty\qquad\textrm{for any fixed $R>0$}.
\end{align}}
\end{itemize}
The usual assumption only requires that the expression in \eqref{integr-G} is finite with $\delta=1$; however, one may often show that the map $(t,x)\mapsto \kappa(t,x)$ is in fact bounded on any compact set $[0,T]\times[-R,R]$.

It is worth noticing that conditions $(\textbf{A.1})$, $(\textbf{A.2})$ and $(\textbf{A.4})$ have a local character and $G$ could exhibit jumps somewhere without invalidating results of this paper. 


\section{Continuity of the free-boundary}

In what follows we will prove continuity of the free-boundary in different settings. For increasing free-boundary we obtain a first proof under condition (\textbf{C.3}) and a second one under condition (\textbf{C.4}); for decreasing free-boundary we only require condition (\textbf{C.2}).

\begin{theorem}\label{th-cont}
Let (\textbf{A.1})--(\textbf{A.4}) and Assumption \ref{assfreeb} hold. Let $[t_1,t_2]\subset [0,T]$ be a time interval where the free-boundary is \emph{increasing}. Then, under conditions (\textbf{C.1}) and (\textbf{C.3}), the free-boundary $t\mapsto b(t)$ is continuous on $[t_1,t_2]$.
\end{theorem}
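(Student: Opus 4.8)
The plan is to argue by contradiction and rule out a jump of $b$ on $[t_1,t_2]$ by a local maximum-principle argument for $u:=V-G$. Since $b$ is increasing it is monotone, hence it admits one-sided limits everywhere and has at most countably many discontinuities; moreover, because $\DD$ is closed one checks that $b$ must be right-continuous (if $t_n\downarrow t_0$ and $x_n\le b(t_n)$, closedness forces the limit to satisfy $x\le b(t_0)$, i.e.\ $b(t_0+)\le b(t_0)$), so that any discontinuity is a left jump. Suppose then, for contradiction, that $b$ is discontinuous at some $t_0\in(t_1,t_2]$ and set $y_1:=b(t_0-)<b(t_0)=:y_2$. For $t<t_0$ one has $b(t)\le y_1$, so the open rectangle $R:=(t_0-\eta,t_0)\times(x_1,x_2)$ with $y_1<x_1<x_2<y_2$ is contained in $\CC$, while the top segment $\{t_0\}\times[x_1,x_2]$ lies in $\DD$ since $x<y_2=b(t_0)$ there. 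Choosing $x_2$ within $\eps$ of $y_2$ and $\eta<\eps$ guarantees that $\overline R$ is contained in the ball $\mathcal{B}^{\eps}_{t_0,b(t_0)}$ of (\textbf{C.1}).

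On $R\subset\CC$ the value function solves $V_t+\mathbb{L}_XV=0$, whence $u=V-G$ satisfies
\begin{align}\label{plan-Lu}
u_t+\mathbb{L}_X u=-\big(G_t+\mathbb{L}_XG\big)=-H\ge\ell_\eps>0\qquad\textrm{in }R,
\end{align}
by (\textbf{C.1}). In addition $u\ge0$ everywhere (since $V\ge G$), $u>0$ in $R$, and $u(t_0,x)=V(t_0,x)-G(t_0,x)=0$ for $x\in(x_1,x_2)$ because the top segment belongs to $\DD$. The idea is that \eqref{plan-Lu} is incompatible with $u$ vanishing on the flat top while staying nonnegative below it: the strict sign of the parabolic operator applied to $u$ forces $u$ to bend the wrong way at an interior point of the final-time slice.

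To make this precise I would fix $x^*\in(x_1,x_2)$ and work at the point $(t_0,x^*)\in\overline{\mathcal{B}\cap\CC}$. Since $u(t_0,\cdot)\equiv0$ on $(x_1,x_2)$ one has $u_x(t_0,x^*)=u_{xx}(t_0,x^*)=0$, while $u\ge0$ together with $u(t_0,x^*)=0$ gives $u_t(t_0,x^*)\le0$ (the time derivative being taken from $t<t_0$). Consequently $u_t+\mathbb{L}_Xu$ evaluated at $(t_0,x^*)$ reduces to $u_t(t_0,x^*)\le0$, which contradicts $u_t+\mathbb{L}_Xu\ge\ell_\eps>0$ there, the latter following from \eqref{plan-Lu} and the continuity of $H$ on $\overline{\mathcal{B}\cap\CC}$ granted by (\textbf{A.4}). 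This contradiction rules out the jump and yields continuity on $[t_1,t_2]$; the endpoint case $t_0=t_2$ is identical.

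The main obstacle is the regularity of $V$ up to the flat top $\{t_0\}\times(x_1,x_2)$, which is needed both to compute $u_t$, $u_x$ and $u_{xx}$ there and to pass \eqref{plan-Lu} to the closure. This is exactly where (\textbf{C.3}) enters: the local Hölder modulus of $V$ provides the \emph{a priori} continuity that, combined with the uniform parabolicity on $R$ (guaranteed by $\sigma>0$ and (\textbf{A.1})) and the smoothness of the Cauchy datum $V(t_0,\cdot)=G(t_0,\cdot)$ on $(x_1,x_2)$ from (\textbf{A.4}), allows one to identify $V$ on $R$ with the solution of the corresponding terminal-value parabolic problem and to invoke interior Schauder estimates up to the final-time slice, thereby obtaining $V\in C^{1,2}$ there. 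I expect this one-sided $C^{1,2}$-regularity, rather than the maximum-principle step itself, to be the delicate part of the proof.
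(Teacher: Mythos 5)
Your contradiction scheme itself (rectangle under the jump, $u:=V-G$ satisfying $u_t+\mathbb{L}_Xu=-H\ge\ell_\eps$ in $R$, and the pointwise evaluation at an interior point $(t_0,x^*)$ of the top slice, where $u_x=u_{xx}=0$ and $u_t\le 0$) is sound in outline, but it rests entirely on the claim that $u_t,u_x,u_{xx}$ extend continuously to the interior of $\{t_0\}\times(x_1,x_2)$, and the justification you offer for that claim does not close the step. Interior Schauder estimates up to the terminal slice require the terminal datum to be of class $C^{2+\alpha}$; here (\textbf{A.4}) only gives $G(t_0,\cdot)\in C^{2}$ on $[x_1,x_2]$ ($G_{xx}$ continuous, not necessarily H\"older or Dini continuous), so Schauder theory cannot be invoked off the shelf. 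Nor does (\textbf{C.3}) help here: a H\"older modulus for $V$ itself says nothing about second derivatives near the top slice, and the scenario your argument must exclude is exactly a self-similar behaviour $u(t,x)\approx (t_0-t)\,f\big((x-x^*)/\sqrt{t_0-t}\,\big)$, for which $u\to0$ with any prescribed modulus while $u_{xx}$ does \emph{not} tend to $0$. The regularity claim is in fact true, but proving it is a genuine piece of work: for instance, time-reverse the rectangle, split $V$ into the whole-line Cauchy solution with a $C^2_c$ extension of $G(t_0,\cdot)$ as datum (whose relevant derivatives extend continuously to the initial time by semigroup or parametrix arguments) plus a remainder with zero terminal datum and merely continuous lateral data, whose derivatives vanish at interior points as $t\uparrow t_0$ because the influence of the lateral boundary is of order $e^{-c/(t_0-t)}$ there. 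That argument is the real content of your proof, and it is missing.

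For comparison, the paper's proof is engineered precisely to avoid this step (a footnote there stresses that $V$ is $C^{1,2}$ only in the open rectangle, not on its closure). It multiplies the PDE by a test function $\psi\in C^\infty_c([x_1,x_2])$, integrates $V_t$ in time exactly (only continuity of $V$ up to $\overline{\mathcal{R}}$ is needed), moves $\mathbb{L}_X$ onto $\psi$ via the formal adjoint so that no derivative of $V$ ever appears, and then uses (\textbf{C.3}) quantitatively through the bound $0\le V-G\le \vartheta(y)(t_0-s)^{\alpha/2}$ to arrive at \eqref{continuity12}, i.e.\ $0\le-\ell(t_0-t)+\gamma(t_0-t)^{1+\alpha/2}$, which is contradictory as $t\uparrow t_0$. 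Note that in your argument (\textbf{C.3}) does no work at all: plain continuity (\textbf{A.3}) already identifies $V$ with the Cauchy--Dirichlet solution on $\mathcal{R}$. That should have been a warning sign: if the boundary regularity followed as easily as claimed, your proof would subsume both Theorem \ref{th-cont} and Theorem \ref{cor-cont} without either (\textbf{C.3}) or (\textbf{C.4}). Either supply the boundary-regularity argument in full detail, or adopt the paper's weak (test-function) formulation, where the hypothesis (\textbf{C.3}) is what actually carries the estimate.
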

\begin{proof}

Since $\DD$ is closed and $b$ is increasing, it follows from standard arguments (see, e.g.~\cite{Jacka}) that $b$ is \emph{right-continuous}. To prove continuity we argue by contradiction and assume that there exists $t_0\in(t_1,t_2]$ such that a discontinuity of $b$ occurs. That is, at $t_0$ one has $b(t_0-)<b(t_0)$, where $b(t_0-)$ denotes the left limit of the boundary at $t_0$ (this always exists as $b(t)$ is monotone increasing in $[t_1,t_2]$). Take $x_1$ and $x_2$ such that $b(t_0-)<x_1<x_2<b(t_0)$; then, for arbitrary but fixed $t^\prime\in(t_1,t_0)$ define an open bounded domain $\mathcal{R}\subset \CC$ with $\mathcal{R}:=(t^\prime,t_0)\times(x_1,x_2)$. Its parabolic boundary $\partial_P\mathcal{R}$ is formed by the horizontal lines $[t^\prime,t_0)\times\{x_i\}$, $i=1,2$, and by the vertical line $\{t_0\}\times[x_1,x_2]$ (note that in this setting $\CC$ lies on the left of the vertical segment $[b(t_0-),b(t_0)]$).

From \eqref{freeb-pr} we know that $V$ (uniquely) solves the Cauchy-Dirichlet problem
\begin{eqnarray}\label{continuity01}
\begin{array}{ll}
\vspace{+5pt}
u_t+\mathbb{L}_Xu=0 & \textrm{in $\mathcal{R}$}\\
\vspace{+5pt}
u=V & \textrm{on $\partial_P\mathcal{R}$}
\end{array}
\end{eqnarray}
and it is $C^{1,2}$ in the interior of $\mathcal{R}$. Denote by $C^{\infty}_c([x_1,x_2])$ the set of functions with infinitely many continuous derivatives and compact support in $[x_1,x_2]$. Take $\psi\ge0$ arbitrary in $C^{\infty}_c([x_1,x_2])$ and such that $\int^{x_2}_{x_1}{\psi(y)dy}=1$. Multiply the first equation in \eqref{continuity01} (with $V$ instead of $u$) by $\psi$ and integrate\footnote{Note that $V$ is $C^{1,2}$ on $\mathcal{R}_P$ and not necessarily on $\overline{\mathcal{R}}_P$. The integration with respect to $y$ may be interpreted in the sense of distributions by taking derivatives of $\psi$. The integral with respect to $s$ is well defined since $\int^{t_0}_{t}\int_{x_1}^{x_2}{V_t(s,y)\psi(y)dy\,ds}=\int_{x_1}^{x_2}{\big(V(t_0,y)-V(t,y)\big)\psi(y)dy}$.} over $(t,t_0)\times(x_1,x_2)$ for some $t\in(t^\prime,t_0)$. It gives
\begin{align}\label{continuity02}
\int^{t_0}_{t}\hs{-5pt}\int_{x_1}^{x_2}{\hs{-3pt}\Big(V_t(s,y)+\mathbb{L}_XV(s,y)\Big)\psi(y)dy\,ds}=0.
\end{align}
We want to estimate the left-hand side of \eqref{continuity02} and provide an upper bound. In order to do so we will study separately the two terms of the integrand.

Integrating $V_t$ over $(t,t_0)$ and using $V(t_0,y)=G(t_0,y)$ and $V(t,y)\ge G(t,y)$ it follows
\begin{align}\label{continuity03}
\int^{t_0}_{t}\hs{-5pt}\int_{x_1}^{x_2}{V_t(s,y)\psi(y)dy\,ds}\le \int_{x_1}^{x_2}{\big(G(t_0,y)-G(t,y)\big)\psi(y)dy}=\int^{t_0}_{t}\hs{-5pt}\int_{x_1}^{x_2}{G_t(s,y)\psi(y)dy\,ds}.
\end{align}
Now, we integrate by parts in $dy$ the term in \eqref{continuity02} involving the infinitesimal generator of $X$ and we use the fact that $\psi$ has compact support in $[x_1,x_2]$. It gives
\begin{align}\label{continuity04}
\int^{t_0}_{t}\hs{-5pt}\int_{x_1}^{x_2}{\mathbb{L}_XV(s,y)\psi(y)dy\,ds}=\int^{t_0}_{t}\hs{-5pt}\int_{x_1}^{x_2}{V(s,y)
{\mathbb{L}_X}^*\psi(y)dy\,ds},
\end{align}
where ${\mathbb{L}_X}^*$ denotes the formal adjoint of $\mathbb{L}_X$ defined by
\begin{align}\label{continuity05}
{\mathbb{L}_X}^*\phi(x):=\frac{1}{2}\frac{\partial^2}{\partial x^2}\big(\sigma^2(x)\phi(x)\big)-\frac{\partial}{\partial x}\big(\mu(x)\phi(x)\big),\qquad\phi\in C^2(\mathbb{R}).
\end{align}
Note that with no loss of generality we can take $\mathcal{R}$ so that \eqref{continuity05} is well defined and continuous on $\overline{\mathcal{R}}$ by (\textbf{A.1}).

Since $G\in C^{1,2}(\overline{\mathcal{R}})$ there exists a continuous function $\theta_G(|\cdot|)$ such that
\begin{align}\label{continuity06}
\big|V(s,y)-G(s,y)\big|&\le\big|V(s,y)-V(t_0,y)\big|+\big|G(t_0,y)-G(s,y)\big|\nonumber\\
&\le\theta_1(|y|)(t_0-s)^{\alpha/2}+\theta_G(|y|)(t_0-s)\qquad\textrm{for all $(s,y)\in\overline{\mathcal{R}}$}
\end{align}
by \eqref{Vcont}. We may consider $|t_0-t|<1$ and hence \eqref{continuity06} holds with the right-hand side replaced by $\big(\theta_1(|y|)+\theta_G(|y|)\big)(t_0-s)^{\alpha/2}$. We set $\vartheta(y):=\theta_1(|y|)+\theta_G(|y|)$ and use $V\ge G$ and \eqref{continuity06} to obtain
\begin{align}\label{continuity07}
G(s,y)\le V(s,y)\le G(s,y)+\vartheta(y)(t_0-s)^{\alpha/2}\quad\textrm{in $\overline{\mathcal{R}}$}.
\end{align}
For any $s\in(t,t_0)$ we deduce from \eqref{continuity07} that
\begin{align}\label{continuity08}
\int^{x_2}_{x_1}&{V(s,y){\mathbb{L}_X}^*\psi(y)dy}\nonumber\\
=&\int^{x_2}_{x_1}{I_{\{{\mathbb{L}_X}^*\psi\ge 0\}}(y)V(s,y){\mathbb{L}_X}^*\psi(y)dy}+\int^{x_2}_{x_1}{I_{\{{\mathbb{L}_X}^*\psi< 0\}}(y)V(s,y){\mathbb{L}_X}^*\psi(y)dy}\nonumber\\
\le&\int^{x_2}_{x_1}{I_{\{{\mathbb{L}_X}^*\psi\ge 0\}}(y)G(s,y){\mathbb{L}_X}^*\psi(y)dy}+\int^{x_2}_{x_1}{I_{\{{\mathbb{L}_X}^*\psi< 0\}}(y)G(s,y){\mathbb{L}_X}^*\psi(y)dy}\nonumber\\
&+(t_0-s)^{\alpha/2}\int^{x_2}_{x_1}{I_{\{{\mathbb{L}_X}^*\psi\ge 0\}}(y)\vartheta(y){\mathbb{L}_X}^*\psi(y)dy}\nonumber\\
=&\int^{x_2}_{x_1}{G(s,y){\mathbb{L}_X}^*\psi(y)dy}+(t_0-s)^{\alpha/2}\int^{x_2}_{x_1}{I_{\{{\mathbb{L}_X}^*\psi\ge 0\}}(y)\vartheta(y){\mathbb{L}_X}^*\psi(y)dy}\nonumber\\
=&\int^{x_2}_{x_1}{\mathbb{L}_XG(s,y)\psi(y)dy}+(t_0-s)^{\alpha/2}\int^{x_2}_{x_1}{I_{\{{\mathbb{L}_X}^*\psi\ge 0\}}(y)\vartheta(y){\mathbb{L}_X}^*\psi(y)dy}
\end{align}
by integration by parts. Note that the last term is strictly positive by arbitrariness of $\psi$. We define $\gamma\equiv\gamma(\psi;x_1,x_2)>0$ by
\begin{align}\label{continuity09}
\gamma:=\int^{x_2}_{x_1}{I_{\{{\mathbb{L}_X}^*\psi\ge 0\}}(y)\vartheta(y){\mathbb{L}_X}^*\psi(y)dy}.
\end{align}

Now, from \eqref{continuity02}, \eqref{continuity03}, \eqref{continuity04} and \eqref{continuity08} we obtain
\begin{align}\label{continuity10}
0\le \int^{t_0}_{t}\hs{-5pt}\int^{x_2}_{x_1}{\Big(G_t(s,y)+\mathbb{L}_XG(s,y)\Big)\psi(y)dy\,ds}+\gamma(t_0-t)^{1+\alpha/2}.
\end{align}
One may observe from (\textbf{C.1}) that the first integral in \eqref{continuity10} must be strictly ne\-ga\-tive. 
In fact, recalling that $\int^{x_2}_{x_1}\psi(y)dy=1$, there exists $\ell>0$ depending on $x_1$, $x_2$, such that
\begin{align}\label{continuity12}
0\le -\ell(t_0-t)+\gamma(t_0-t)^{1+\alpha/2}
\end{align}
by \eqref{regG-bdry} and \eqref{continuity10}. In the limit as $t\uparrow t_0$ we inevitably reach a contradiction as the positive term vanishes more rapidly than the negative one and hence the jump may not occur.
\end{proof}

The proof above did not require any probabilistic arguments and it follows from simple PDE results and the regularity assumptions on $V$ and $G$. It is sometimes useful to relax condition (\textbf{C.3}) and replace it by (\textbf{C.4}). The latter is in fact easier to verify than (\textbf{C.3}) and holds for a wide class of gain functions $G$. Although the proof of continuity becomes slightly more involved, bounds similar to \eqref{continuity07} may be retrieved by means of purely probabilistic arguments.

\begin{theorem}\label{cor-cont}
Let (\textbf{A.1})--(\textbf{A.4}) and Assumption \ref{assfreeb} hold. Let $[t_1,t_2]\subset [0,T]$ be a time interval where the free-boundary is \emph{increasing}. Then, under conditions (\textbf{C.1}) and (\textbf{C.4}) the free-boundary $t\mapsto b(t)$ is continuous on $[t_1,t_2]$.
\end{theorem}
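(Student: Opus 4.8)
The plan is to reproduce verbatim the contradiction scheme of the proof of Theorem~\ref{th-cont}: assuming a jump $b(t_0-)<b(t_0)$, I fix $b(t_0-)<x_1<x_2<b(t_0)$, a test function $\psi\in C^\infty_c([x_1,x_2])$ with $\int\psi=1$, form the rectangle $\mathcal{R}=(t',t_0)\times(x_1,x_2)\subset\CC$, and integrate $V_t+\mathbb{L}_XV=0$ against $\psi$ over $(t,t_0)\times(x_1,x_2)$ exactly as in \eqref{continuity02}--\eqref{continuity04}. The handling of the $V_t$-term in \eqref{continuity03} uses only $V(t_0,\cdot)=G(t_0,\cdot)$ and $V\ge G$, so it is unchanged, and the integration by parts \eqref{continuity04} producing ${\mathbb{L}_X}^*\psi$ is unaffected as well. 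The only place where (\textbf{C.3}) entered was the two-sided bound \eqref{continuity07} on $V-G$; the whole point is therefore to replace \eqref{continuity07} by a bound obtained from (\textbf{C.4}), namely
\[
0\le V(s,y)-G(s,y)\le C\,\kappa(s,y)^{1/\delta}\,(t_0-s)^{1/\delta'}\qquad\textrm{for }(s,y)\in\overline{\mathcal{R}},
\]
where $\delta>1$ and $\kappa$ are as in (\textbf{C.4}) and $\delta':=\delta/(\delta-1)$.

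To prove this bound fix $(s,y)\in\mathcal{R}$, write $h:=t_0-s$, and recall $V(s,y)=\EE_y[G(s+\tau_*,X_{\tau_*})]$ with $\tau_*=\tau_*(s,y)$. Choose $\beta>0$ so small that $b(t_0-)<x_1-\beta$ and $x_2+\beta<b(t_0)$, and set $\theta:=\tau_*\wedge h\wedge\rho_\beta$ with $\rho_\beta:=\inf\{r\ge0:|X_r-y|\ge\beta\}$. For $r\le\theta$ the graph $(s+r,X_r)$ stays in the tube $[s,t_0]\times[x_1-\beta,x_2+\beta]$, which for $t'$ near $t_0$ and $\beta$ small lies in the region $\overline{\mathcal{B}\cap\CC}$ of (\textbf{A.4})--(\textbf{C.1}), where $G\in C^{1,2}$ and $G_t+\mathbb{L}_XG\le-\ell_\eps$; hence Dynkin's formula yields $\EE_y[G(s+\theta,X_\theta)]-G(s,y)\le-\ell_\eps\,\EE_y[\theta]\le0$. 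On the other hand, conditioning $V(s,y)=\EE_y[G(s+\tau_*,X_{\tau_*})]$ on $\cF_\theta$ and using the strong Markov property (so that the conditional expectation equals $V(s+\theta,X_\theta)$ on $\{\theta<\tau_*\}$ and $G(s+\theta,X_\theta)$ on $\{\theta=\tau_*\}$) and subtracting the Dynkin inequality, the terms containing $G(s,y)$ cancel and one is left with
\[
0\le V(s,y)-G(s,y)\le\EE_y\big[(V-G)(s+\theta,X_\theta)\,I_{\{\theta<\tau_*\}}\big].
\]

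The decisive geometric observation is that, with this choice of $\beta$, on $\{\theta<\tau_*\}$ one necessarily has $\rho_\beta<h$: otherwise the path would remain in $(y-\beta,y+\beta)\subset(b(t_0-),b(t_0))$ throughout $[0,h]$, so that $X_{h}<x_2+\beta<b(t_0)$ would place $(t_0,X_{h})$ in $\DD$ and force $\tau_*\le h=\theta$, a contradiction. Hence $\{\theta<\tau_*\}\subset\{\sup_{0\le r\le h}|X_r-y|\ge\beta\}$, and a standard moment estimate for \eqref{SDE} on the bounded tube gives $\PP_y(\theta<\tau_*)\le Ch/\beta^2$. I then bound the right-hand side by Hölder's inequality with exponents $\delta,\delta'$: using the elementary bound $|V(t',x)|\le\kappa(t',x)^{1/\delta}$ (Jensen) and the strong-Markov/tower estimate $\EE_y[\kappa(s+\theta,X_\theta)]\le\kappa(s,y)$, one gets $\big(\EE_y[((V-G)(s+\theta,X_\theta))^\delta]\big)^{1/\delta}\le C\kappa(s,y)^{1/\delta}$, whence
\[
V(s,y)-G(s,y)\le C\kappa(s,y)^{1/\delta}\big(\PP_y(\theta<\tau_*)\big)^{1/\delta'}\le C'\kappa(s,y)^{1/\delta}(t_0-s)^{1/\delta'},
\]
the required substitute for \eqref{continuity07}.

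Inserting this into the analogue of \eqref{continuity08} (splitting ${\mathbb{L}_X}^*\psi$ into its positive and negative parts and using $0\le V-G$) and then into \eqref{continuity10}, the error term is controlled by $\int^{t_0}_t\!\int^{x_2}_{x_1}C'\kappa(s,y)^{1/\delta}(t_0-s)^{1/\delta'}|{\mathbb{L}_X}^*\psi(y)|\,dy\,ds\le C''\int^{t_0}_t(t_0-s)^{1/\delta'}ds=O\big((t_0-t)^{1+1/\delta'}\big)$, where I used $\sup_s\int^{x_2}_{x_1}\kappa(s,y)^{1/\delta}dy<\infty$ from (\textbf{C.4}) and the boundedness of ${\mathbb{L}_X}^*\psi$. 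Together with (\textbf{C.1}) this gives $0\le-\ell(t_0-t)+O\big((t_0-t)^{1+1/\delta'}\big)$ with $\ell>0$; dividing by $(t_0-t)$ and letting $t\uparrow t_0$ forces $0\le-\ell$, a contradiction, so $b$ has no jump and is continuous on $[t_1,t_2]$. The main obstacle is the second paragraph: producing the clean recursion $V-G\le\EE_y[(V-G)(s+\theta,X_\theta)I_{\{\theta<\tau_*\}}]$ by combining the localised Dynkin identity with the strong Markov property, and choosing the localisation radius $\beta$ so that the residual event $\{\theta<\tau_*\}$ both has probability $O(t_0-s)$ and keeps the path inside the region where (\textbf{C.1}) holds; condition (\textbf{C.4}) is then exactly what is needed to control the (possibly large) size of $V-G$ away from the boundary against this small probability.
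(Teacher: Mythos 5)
Your proposal is correct and is essentially the paper's own argument: the same contradiction scheme and test-function integration against ${\mathbb{L}_X}^*\psi$, with (\textbf{C.3}) replaced by a bound of the form $0\le V-G\le C\,\kappa^{1/\delta}(t_0-s)^{\zeta}$ on the rectangle, obtained---exactly as in the paper---by localising the path until it either hits the boundary, reaches time $t_0$, or escapes sideways by a fixed distance, observing that the sideways escape is the only event on which $V-G$ can pick up mass, estimating that event's probability by a maximal/Markov inequality of order $O\big((t_0-s)^{\text{power}}\big)$, and applying H\"older together with (\textbf{C.4}). The only (immaterial) difference is how the intermediate inequality is produced: the paper applies It\^o's formula to $G$ up to the exit time $\tau_{\mathcal{U}}\wedge\tau_*$ of a fixed larger rectangle $\mathcal{U}$ and H\"older-bounds the increment $G(s+\tau_*,X_{\tau_*})-G(s+\tau_{\mathcal{U}},X_{\tau_{\mathcal{U}}})$ pathwise by $2\sup_r|G|$ (so that only $G$ is ever estimated), whereas you additionally invoke the martingale property of the value process up to $\tau_*$ (via the strong Markov property) and the tower bound $\EE_y\big[\kappa(s+\theta,X_\theta)\big]\le\kappa(s,y)$ to reach the equivalent estimate.
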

\begin{proof}
We recall once more that since $\DD$ is closed and $b$ is increasing, standard arguments (see, e.g.~\cite{Jacka}) imply that $b$ is \emph{right-continuous}. To prove continuity we argue again by contradiction and assume that there exists $t_0\in(t_1,t_2]$ where a discontinuity of $b$ occurs and $b(t_0-)<b(t_0)$. We define an open bounded domain $\mathcal{U}\subset \CC$, $\mathcal{U}:=(\bar{t},t_0)\times(x^0_1,x^0_2)$ with $x^0_1$ and $x^0_2$ such that $b(t_0-)<x^0_1<x^0_2<b(t_0)$ and arbitrary $\bar{t}\in(t_1,t_0)$. Its parabolic boundary $\partial_P\mathcal{U}$ is formed by the horizontal lines $[\bar{t},t_0)\times\{x^0_i\}$, $i=1,2$ and by the vertical line $\{t_0\}\times[x^0_1,x^0_2]$.  Hence, the value function $V$ is (unique) classical solution of the boundary value problem
\begin{eqnarray}\label{recont01}
\begin{array}{ll}
\vspace{+5pt}
u_t+\mathbb{L}_Xu=0 & \textrm{in $\mathcal{U}$}\\
\vspace{+5pt}
u=V & \textrm{on $\partial_P\mathcal{U}$}
\end{array}
\end{eqnarray}
by \eqref{freeb-pr}.

Fix $\eta_0>0$ such that $2\eta_0<\min\{|x^0_2-x^0_1|,2\}$, take $x_1$ and $x_2$ such that $(x_1,x_2)\subset (x^0_1+\eta_0,x^0_2-\eta_0)$ and take an arbitrary $t\in(\bar{t},t_0)$. Now define another open bounded domain $\mathcal{R}\subset\mathcal{U}$ by $\mathcal{R}:=(t,t_0)\times(x_1,x_2)$ with parabolic boundary $\partial_P\mathcal{R}$ formed by the horizontal lines $(t,t_0)\times\{x_i\}$, $i=1,2$ and by the vertical line $\{t_0\}\times[x_1,x_2]$ and such that \eqref{regG-bdry} holds.

Take $\psi\ge0$ arbitrary in $C^{\infty}_c([x_1,x_2])$ and such that $\int^{x_2}_{x_1}{\psi(y)dy}=1$, multiply the first equation in \eqref{recont01} (with ${V}$ instead of $u$) by $\psi$ and integrate over $(t,t_0)\times(x_1,x_2)$. It follows
\begin{align}\label{recont02}
\int^{t_0}_{t}\hs{-5pt}\int^{x_2}_{x_1}{\Big({V}_t(s,y)+\mathbb{L}_XV(s,y)\Big)\psi(y)dy\,ds}=0.
\end{align}
Of course we would like to reproduce here arguments similar to those adopted in \eqref{continuity02}--\eqref{continuity10} to find a contradiction in \eqref{recont02}. However, in order to do so we need a bound similar to \eqref{continuity07}.

Let $\tau_{\mathcal{U}}$ denote the first exit time of $(s+r,X_r)_{r\ge0}$ from $\mathcal{U}$ for $s\in[t,t_0)$ and $X_0=y\in(x_1,x_2)$, that is,
\begin{align}\label{recont03}
\tau_{\mathcal{U}}(s,y):=\inf\big\{s\ge0\,:\,(s+r,X_r)\notin\mathcal{U}\big\}\quad\textrm{under $\PP_y$, $y\in(x_1,x_2)$}.
\end{align}
Set $\tau_{\mathcal{U}}\equiv\tau_{\mathcal{U}}(s,y)$ for simplicity. Clearly $\tau_{\mathcal{U}}\le t_0-s\le t_0-t$, $\PP_y$-a.s.\ for $y\in(x_1,x_2)$. Also, the stopping time
\begin{align}\label{recont04}
\tau_*(s,y):=\inf\big\{r\ge0\,:\,X_r\le b(s+r)\big\}\wedge (T-s)
\end{align}
is an optimal stopping time for $V(s,y)$ and $\tau_{\mathcal{U}}\le\tau_*$, $\PP_y$-a.s.\ for $y\in(x_1,x_2)$, from monotonicity of $b$.

Since $\mathcal{U}$ is arbitrary, $G\in C^{1,2}$ in $\overline{\mathcal{U}}$ by (\textbf{A.4}) and we may use It\^o's calculus to obtain
\begin{align}\label{recont05}
G(s,y)=\EE_y\Big[G(s+\rho,X_\rho)-\int_0^\rho{\Big(G_t+\mathbb{L}_XG\Big)(s+r,X_r)dr}\Big]
\end{align}
for all stopping times $\rho\le \tau_{\mathcal{U}}$, $\PP_y$-a.s., $y\in(x_1,x_2)$. Set for simplicity $\tau_*\equiv\tau_*(s,y)$ and take $\rho=\tau_{U}\wedge\tau_*$ in \eqref{recont05}. It follows
\begin{align}\label{recont06}
0\le&V(s,y)-G(s,y)\nonumber\\
=&\EE_y\Big[G(s+\tau_*,X_{\tau_*})-G(s+\tau_{U}\wedge\tau_*,X_{\tau_{\mathcal{U}}\wedge\tau_*})
+\int_0^{\tau_{\mathcal{U}}\wedge\tau_*}{\Big(G_t+\mathbb{L}_XG\Big)(s+r,X_r)dr}\Big]\nonumber\\
\le&\EE_y\Big[G(s+\tau_*,X_{\tau_*})-G(s+\tau_{\mathcal{U}}\wedge\tau_*,X_{\tau_{\mathcal{U}}\wedge\tau_*})
\Big],
\end{align}
where in the last inequality we used \eqref{regG-bdry}. The only non-zero contribution in the last expression of \eqref{recont06} comes from the set $\big\{\tau_*>\tau_{\mathcal{U}}\big\}$ and we have
\begin{align}\label{recont07}
0\le&V(s,y)-G(s,y)\nonumber\\
\le&\EE_y\Big[\Big(G(s+\tau_*,X_{\tau_*})-G(s+\tau_{\mathcal{U}},X_{\tau_{\mathcal{U}}})\Big)I_{\{\tau_*>\tau_{\mathcal{U}}\}}\Big]
\nonumber\\
\le&\EE_y\Big[\Big|G(s+\tau_*,X_{\tau_*})-G(s+\tau_{\mathcal{U}},X_{\tau_{\mathcal{U}}})\Big|^\delta\Big]
^{\frac{1}{\delta}}\PP_y\big(\tau_*>\tau_{\mathcal{U}}\big)^{1-\frac{1}{\delta}}
\end{align}
where we have used H\"older's inequality $\EE|XY|\le\big(\EE |X|^p\big)^{1/p}\big(\EE |Y|^q\big)^{1/q}$ with $p=\delta$ and $q=\frac{\delta}{\delta-1}$. Note that condition (\textbf{C.4}) guarantees that the last term in \eqref{recont07} is well defined.

We observe that if $\tau_*>\tau_{\mathcal{U}}$ then the process exits $\mathcal{U}$ from the upper/lower horizontal boundary strictly before hitting the free-boundary $b$. This also means that $\tau_{\mathcal{U}}<t_0-s$ as otherwise $\tau_{\mathcal{U}}=\tau_*$. Now, recalling that $y\in(x_1,x_2)\subset(x^0_1+\eta_0,x^0_2-\eta_0,)$ we find
\begin{align}\label{recont08}
\Big\{\tau_*>\tau_{\mathcal{U}}\Big\}&\subset\Big\{X_{\tau_{\mathcal{U}}}\le x_1-\eta_0\:\:\:\textrm{or}\:\:\:X_{\tau_{\mathcal{U}}}\ge x_2+\eta_0\Big\}\subset\Big\{\sup_{0\le r\le t_0-s}\big|X_r-y\big|>\eta_0\Big\}.
\end{align}
From \eqref{recont08}, Markov inequality and standard estimates for strong solutions of SDEs (cf.\ for instance \cite{Krylov} Chapter 2, Section 5, Corollary 12 or \cite{Fri06}, Chapter 5, Theorem 2.3 for the case of locally Lipschitz coefficients $\mu$ and $\sigma$) it follows
\begin{align}\label{recont09}
\PP_y\big(\tau_*>\tau_{\mathcal{U}}\big)\le&\PP_y\Big(\sup_{0\le r\le t_0-s}\big|X_r-y\big|>\eta_0\Big)\le\frac{1}{\eta^\beta_0}\EE_y\Big[\sup_{0\le r\le t_0-t}\big|X_r-y\big|^\beta\Big]\nonumber\\
\le&\frac{1}{\eta^\beta_0}C_{T,\beta}\big(1+|y|^\beta\big)\big(t_0-t\big)^{\beta/2}
\end{align}
for a suitable constant $C_{T,\beta}>0$ only depending on $T$ and arbitrary $\beta>0$.

Set $\zeta:=1-1/\delta$. Using \eqref{recont07}, \eqref{recont09} and \eqref{integr-G} we finally obtain
\begin{align}\label{recont10}
0\le V(s,y)-G(s,y)\le 2\kappa(s,y)^{1/\delta}\Big(\frac{1}{\eta^\beta_0}C_{T,\beta}\big(1+|y|^\beta\big)\Big)^{\zeta}\big(t_0-t\big)
^{\zeta\beta/2}
\end{align}
for $y\in(x_1,x_2)$. The inequality \eqref{recont10} provides the analogue of \eqref{continuity07} in the present setting. We repeat the same arguments as in \eqref{continuity01}--\eqref{continuity04} to obtain
\begin{align}\label{recont11}
0\le \int^{t_0}_{t}\hs{-5pt}\int_{x_1}^{x_2}{G_t(s,y)\psi(y)dy\,ds}+
\int^{t_0}_{t}\hs{-5pt}\int_{x_1}^{x_2}{V(s,y){\mathbb{L}_X}^*\psi(y)dy\,ds}
\end{align}
from \eqref{recont02} and use \eqref{recont10}, \eqref{recont11}, condition (\textbf{C.4}) and calculations as in \eqref{continuity08} to find
\begin{align}\label{recont12}
0\le \int^{t_0}_{t}\hs{-5pt}\int_{x_1}^{x_2}{G_t(s,y)\psi(y)dy\,ds}+
\int^{t_0}_{t}\hs{-5pt}\int_{x_1}^{x_2}{{\mathbb{L}_X}G(s,y)\psi(y)dy\,ds}+\frac{c}{\eta^\beta_0}
(t_0-t)^{1+\zeta\beta/2}
\end{align}
with
\begin{align}\label{recont13}
c\equiv c(T,\delta,\eta_0,\beta,x^0_1,x^0_2,\psi):=2c^\infty_\psi\Big[C_{T,\beta}\big(1+\max\{|x^0_1|,|x^0_2|\}^\beta\big)\Big]
^{\zeta}\sup_{0\le s\le T}\int^{x^0_2}_{x^0_1}\kappa^{1/\delta}(s,y)dy
\end{align}
and $c^\infty_\psi\equiv c^\infty_\psi(x^0_1,x^0_2):=\sup_{y\in[x^0_1,x^0_2]}\big|{\mathbb{L}}^*_X\psi(y)\big|$. Note that $c^\infty_\psi<+\infty$ as $\mu$, $\sigma$ and $\psi$ are $C^2$ in $[x^0_1,x^0_2]$.
As usual $G_t+\mathbb{L}_XG<-\ell$ in $\mathcal{U}$ for some constant $\ell>0$ by (\textbf{C.1}) and hence
\begin{align}\label{recont14}
0\le -\ell(t_0-t)+\frac{c}{\eta^\beta_0}
(t_0-t)^{1+\zeta\beta/2}
\end{align}
by \eqref{recont12}. For fixed $\delta$, $\eta_0$, $\beta$, $\mathcal{U}$ and $\psi$, \eqref{recont14} leads to a contradiction in the limit as $t\uparrow t_0$.
\end{proof}

When Assumption \ref{assfreeb} holds and the free-boundary is decreasing, arguments above seem to break down. This is mainly due to the fact that if a jump occurs at $t_0$ and the diffusion $(t,X)$ starts either from a point $(t_0+\eps,x)$ with $\eps>0$ or from $(t_0-\eps,x)$ with $x>b(t_0-)$ then, as time elapses it will move away from the discontinuity. On the contrary, in theorems above and in particular in Theorem \ref{cor-cont} we crucially relied on the fact that $(t,X)$ moves towards the jump. Intuitively it might happen that the diffusion does not ``see'' discontinuities of a decreasing boundary and hence the rationale above may not be adopted. Our first natural attempt to overcome this difficulty was to study the time reversed process $(t-s,X_s)_{s\ge0}$ but we realised that this introduces seemingly harder complications to deal with. An approach based on PDE results and condition (\textbf{C.2}) allows instead to find continuity of $b$ again.

\begin{theorem}\label{th-decr}
Let (\textbf{A.1})--(\textbf{A.4}) and Assumption \ref{assfreeb} hold. Let $[t_1,t_2]\subset [0,T]$ be a time interval where the free-boundary is \emph{decreasing}. Then, under condition (\textbf{C.2}) the free-boundary $t\mapsto b(t)$ is continuous on $[t_1,t_2]$.
\end{theorem}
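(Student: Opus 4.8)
The plan is to run a contradiction scheme in the same spirit as in Theorems \ref{th-cont} and \ref{cor-cont}, but to work with the spatial derivative $V_x$ rather than with $V$ itself. This is dictated by condition (\textbf{C.2}), which controls $H_x$ and guarantees $V_x\ge G_x$, and it is in fact forced upon us: repeating the argument of Theorem \ref{th-cont} verbatim one would integrate $V_t$ over $(t_0,t)$ and, using $V(t_0,\cdot)=G(t_0,\cdot)$ together with $V\ge G$, obtain a \emph{lower} bound $\int V_t\psi\,ds\ge\int G_t\psi\,ds$, i.e.\ an inequality with the wrong sign for the contradiction. Differentiating in $x$ restores the usable structure.

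First I would record the one-sided regularity that comes for free: since $\DD$ is closed and $b$ is decreasing, the same standard argument used above (now applied to sequences $t_n\uparrow t_0$, for which $(t_n,b(t_n))\to(t_0,b(t_0-))\in\DD$) shows that $b$ is \emph{left}-continuous. Hence the only discontinuity to exclude is a downward jump from the right, $b(t_0+)<b(t_0)=b(t_0-)$ for some $t_0\in[t_1,t_2)$, which I assume for contradiction. I then choose $x_1,x_2$ with $\max\{b(t_0+),b(t_0)-\eps/2\}<x_1<x_2<b(t_0)$ and $t^\prime>t_0$ close to $t_0$, so that $\mathcal{R}:=(t_0,t^\prime)\times(x_1,x_2)\subset\mathcal{B}\cap\CC$: indeed, for $s>t_0$ monotonicity gives $b(s)\le b(t_0+)<x_1$, so $\mathcal{R}\subset\CC$, while the points $(t_0,y)$ with $y\in[x_1,x_2]$ lie in $\DD$ and therefore satisfy $V(t_0,y)=G(t_0,y)$. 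Fitting $\mathcal{R}$ inside $\mathcal{B}\cap\CC$ is possible precisely because the jump $b(t_0)-b(t_0+)$ is strictly positive, and there (\textbf{C.2}) applies and $\mu,\sigma$ are smooth enough for what follows.

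The core step is to set $W:=V_x-G_x$. By (\textbf{C.2}) one has $W\ge0$ in $\mathcal{R}$, and interior parabolic regularity lets me differentiate $V_t+\mathbb{L}_XV=0$ in $x$ and subtract the $x$-derivative of $G_t+\mathbb{L}_XG=H$, obtaining the linear parabolic equation $W_t+\tfrac{\sigma^2}{2}W_{xx}+(\sigma\sigma^\prime+\mu)W_x+\mu^\prime W=-H_x$ in $\mathcal{R}$, whose source satisfies $-H_x\le-\ell^\prime_\eps<0$ by \eqref{regH-bdry}. Taking $\psi\ge0$ in $C^\infty_c([x_1,x_2])$ with $\int^{x_2}_{x_1}\psi=1$, multiplying by $\psi$ and integrating over $(t_0+\varsigma,t)\times(x_1,x_2)$ for small $\varsigma>0$ and $t\in(t_0,t^\prime)$, the time integral of $W_t$ yields $\int W(t,y)\psi\,dy-\int W(t_0+\varsigma,y)\psi\,dy$; the first term is $\ge0$ since $W\ge0$, while the second, after integrating by parts in $y$, equals $-\int(V-G)(t_0+\varsigma,y)\psi^\prime(y)\,dy\to0$ as $\varsigma\downarrow0$ by continuity of $V$ and $V=G$ on the bottom edge. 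The spatial part, moved onto $\psi$ by integration by parts, is bounded in absolute value by $M_\psi\int_{t_0}^{t}\!\int_{x_1}^{x_2}W\,dy\,ds$; since $W\ge0$ and $\int_{x_1}^{x_2}W(s,y)\,dy=(V-G)(s,x_2)-(V-G)(s,x_1)\to0$ as $s\downarrow t_0$, this term is $o(t-t_0)$. Collecting everything and letting $\varsigma\downarrow0$ gives $0\le-\ell^\prime_\eps(t-t_0)+o(t-t_0)$, which upon dividing by $t-t_0$ and letting $t\downarrow t_0$ forces $\ell^\prime_\eps\le0$, the desired contradiction.

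The step I expect to be the main obstacle is the lack of a priori regularity of $V_x$ up to the bottom edge $\{t_0\}\times(x_1,x_2)$, which is part of the parabolic boundary and where $\CC$ meets $\DD$. I would circumvent it exactly as indicated above: never evaluating $W$ at time $t_0$, but integrating from $t_0+\varsigma$ and passing to the limit through the identities $\int W(t_0+\varsigma,y)\psi\,dy=-\int(V-G)(t_0+\varsigma,y)\psi^\prime\,dy$ and $\int_{x_1}^{x_2}W(s,y)\,dy=(V-G)(s,x_2)-(V-G)(s,x_1)$, so that only continuity of $V$ (guaranteed by (\textbf{A.3})) and the vanishing of $V-G$ at time $t_0$ enter the estimates. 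The sign structure $W\ge0$ from (\textbf{C.2}) is used twice, to make the $W_t$-term nonnegative and to turn $\int W|\Phi|$ into $M_\psi\int W$; the strict inequality $H_x\ge\ell^\prime_\eps$ supplies the dominant negative term, mirroring the role of \eqref{regG-bdry} in the increasing case.
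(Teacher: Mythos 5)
Your proof is correct and takes essentially the same route as the paper: differentiate the free-boundary PDE in $x$ to get $W_t+\mathcal{A}W=-H_x$ for $W=V_x-G_x$, test against a nonnegative $\psi\in C^{\infty}_c([x_1,x_2])$, integrate by parts onto $\psi$ so that only continuity of $V-G$ and its vanishing on the bottom edge $\{t_0\}\times[x_1,x_2]$ enter, and derive the contradiction from $H_x\ge\ell^\prime_\eps$ together with $V_x\ge G_x$. The only differences are organizational: the paper packages the limit via the auxiliary function $F_\psi(t)=\int^{x_2}_{x_1}\bar{u}_t(t,x)\psi(x)\,dx$ and its right limit at $t_0$ (controlling the adjoint term by a second integration by parts, yielding $\int u\,\frac{\partial}{\partial x}(\mathcal{A}^*\psi)\,dx$), whereas you integrate the differentiated equation over a shrinking time window and control the adjoint term by $W\ge0$ and $\int_{x_1}^{x_2}W(s,y)\,dy=(V-G)(s,x_2)-(V-G)(s,x_1)\to0$ --- both are sound.
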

\begin{proof}
In this case adapting standard arguments (see, e.g.~\cite{Jacka}) one finds that $b$ is \emph{left-continuous} as $\DD$ is closed and $b$ is decreasing. Assume that there exists $t_0\in[t_1,t_2)$ such that a discontinuity of $b$ occurs. That is, $b(t_0+)<b(t_0)$, where $b(t_0+)$ denotes the right limit of the boundary at $t_0$. Take $x_1$ and $x_2$ such that $b(t_0+)<x_1<x_2<b(t_0)$ and $t^\prime\in(t_0,t_2)$; then, define once more an open bounded domain $\mathcal{R}\subset \CC$ with $\mathcal{R}:=(t_0,t^\prime)\times(x_1,x_2)$. Its parabolic boundary $\partial_P\mathcal{R}$ is formed by the horizontal lines $(t_0,t^\prime)\times\{x_i\}$, $i=1,2$, and by the vertical line $\{t^\prime\}\times[x_1,x_2]$ (note that in this setting $\CC$ lies on the right of the vertical segment $[b(t_0+),b(t_0)]$).

Set $u:=V-G$ and recall the definition of $H$ from \eqref{regG-bdry}. Hence, $u$ is (unique) classical solution of the boundary value problem
\begin{eqnarray}\label{rerecont01}
\begin{array}{ll}
\vspace{+5pt}
u_t+\mathbb{L}_Xu=-H & \textrm{in $\mathcal{R}$}\\
\vspace{+5pt}
u=V-G & \textrm{on $\partial_P\mathcal{R}$}
\end{array}
\end{eqnarray}
by \eqref{freeb-pr} and (\textbf{A.4}). Define a differential operator $\mathcal{A}$ by
\begin{align}\label{rerecont02}
\mathcal{A}f(x):=\frac{1}{2}\sigma^2(x)f^{\prime\prime}(x)+\big(\sigma\,\sigma^\prime\,(x)+\mu(x)\big)f^\prime(x)+
\mu^\prime(x)f(x),
\qquad x\in\mathcal{R},\,f\in C^2(\mathcal{R}).
\end{align}
Now, conditions (\textbf{A.1}), (\textbf{A.4}) and (\textbf{C.2}) imply that $u_{tx}$ and $u_{xxx}$ exist and are continuous in $\mathcal{R}$ (cf.~\cite{Fri08}, Chapter 3, Theorem 10). We differentiate the first equation in \eqref{recont01} with respect to $x$ and set $\bar{u}:=u_x$ to obtain
\begin{align}\label{rerecont03}
\bar{u}_t+\mathcal{A}\bar{u}=-H_x \quad \textrm{in $\mathcal{R}$},
\end{align}
with $\mathcal{A}$ as in \eqref{rerecont02}. Take $\psi\ge0$ arbitrary in $C^{\infty}_c([x_1,x_2])$ and such that $\int^{x_2}_{x_1}{\psi(y)dy}=1$. We define a function $F_\psi:(t_0,t^\prime)\to\mathbb{R}$ by
\begin{align}\label{rerecont04}
F_\psi(t):=\int^{x_2}_{x_1}{\bar{u}_t(t,x)\psi(x)dx}
\end{align}
and \eqref{rerecont03} gives
\begin{align}\label{rerecont05}
F_\psi(t)=&-\int^{x_2}_{x_1}{\Big(H_x(t,x)+\mathcal{A}\bar{u}(t,x)\Big)\psi(x)dx}\nonumber\\
=&-\int^{x_2}_{x_1}{H_x(t,x)\psi(x)dx}-\int^{x_2}_{x_1}{\bar{u}(t,x)\mathcal{A}^*\psi(x)dx}\\
=&-\int^{x_2}_{x_1}{H_x(t,x)\psi(x)dx}+\int^{x_2}_{x_1}{u(t,x)\frac{\partial}{\partial x}\big(\mathcal{A}^*\psi\big)(x)dx},\nonumber
\end{align}
where $\mathcal{A}^*$ is the formal adjoint of $\mathcal{A}$ and $\frac{\partial}{\partial x}\big(\mathcal{A}^*\psi\big)\in C_c([x_1,x_2])$ by $(\textbf{A.1})$ and arbitrariness of $[x_1,x_2]$. It follows from \eqref{rerecont05} that $F_\psi$ is continuous on $(t_0,t^\prime)$ and the right-limit of $F_\psi$ at $t_0$ exists and it is
\begin{align}\label{rerecont06}
F_\psi(t_0+):=\lim_{t\downarrow t_0}{F_\psi(t)}=-\int^{x_2}_{x_1}{H_x(t_0,x)\psi(x)dx}
\end{align}
by using that $u\in C(\overline{\mathcal{R}})$ and $u(t_0,x)\equiv 0$ for $x\in[x_1,x_2]$. Therefore, from \eqref{regH-bdry} we obtain $F_{\psi}(t_0+)\le -\ell$ for suitable $\ell>0$ and continuity of $F_\psi$ implies that there exists $\eps>0$ such that $F_\psi(t)<-\ell/2$ for all $t\in(t_0,t_0+\eps)$. Set $0<\delta<\eps$, then \eqref{rerecont04}, Fubini's Theorem and an integration by parts give
\begin{align}\label{rerecont07}
-\frac{\ell}{2}(\eps-\delta)>&\int^\eps_\delta{F_\psi(t_0+s)ds}=\int^{x_2}_{x_1}{\big[\bar{u}(t_0+\eps,x)-
\bar{u}(t_0+\delta,x)\big]\psi(x)dx}\nonumber\\
=&\int^{x_2}_{x_1}{\bar{u}(t_0+\eps,x)\psi(x)dx}+\int^{x_2}_{x_1}{u(t_0+\delta,x)\psi^\prime(x)dx}.
\end{align}
Now, taking limits as $\delta\to0$, using dominated convergence and recalling that $\bar{u}=u_x$, $u\in C(\overline{\mathcal{R}})$, $u(t_0,x)\equiv 0$ for $x\in[x_1,x_2]$ we obtain
\begin{align}\label{rerecont08}
-\frac{\ell}{2}\eps\ge&\int^{x_2}_{x_1}{\bar{u}(t_0+\eps,x)\psi(x)dx}=\int^{x_2}_{x_1}{\big[V_x-G_x\big](t_0+\eps,x)
\psi(x)dx}.
\end{align}
Since $\psi\ge0$, then \eqref{VxGx} and \eqref{rerecont08} lead to a contradiction; hence $b$ must be continuous.
\end{proof}

It easy to see that from Theorems \ref{th-cont}--\ref{th-decr} it follows
\begin{coroll}
Let (\textbf{A.1})--(\textbf{A.4}), Assumption \ref{assfreeb} and conditions (\textbf{C.1}), (\textbf{C.2}) hold. Assume that the free-boundary is piecewise monotone on $[0,T]$ and that either (\textbf{C.3}) or (\textbf{C.4}) holds. Then $b$ is continuous on $[0,T]$.
\end{coroll}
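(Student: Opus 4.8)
The plan is to exploit the piecewise-monotonicity of $b$ to reduce the global statement to the three preceding theorems, which already settle continuity on each monotone stretch, and then to glue the pieces together at the finitely many junction points. No new analytic input is needed; the argument is essentially one of decomposition and matching of one-sided limits.

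First I would record what ``piecewise monotone on $[0,T]$'' buys us: there is a finite partition $0=s_0<s_1<\dots<s_n=T$ such that on each closed subinterval $[s_{i-1},s_i]$ the map $t\mapsto b(t)$ is either increasing or decreasing. Finiteness of the partition is implicit in the hypothesis, and it is in fact the only place where the word \emph{piecewise} is genuinely used; without it an accumulation of monotonicity-changing points could in principle obstruct the gluing step, so I would flag this explicitly.

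Next, on each subinterval I would invoke the appropriate theorem. If $b$ is increasing on $[s_{i-1},s_i]$, then condition (\textbf{C.1}) together with whichever of (\textbf{C.3}) or (\textbf{C.4}) is assumed places us exactly in the hypotheses of Theorem \ref{th-cont} or Theorem \ref{cor-cont}, either of which yields continuity of $b$ on the closed interval $[s_{i-1},s_i]$. If instead $b$ is decreasing on $[s_{i-1},s_i]$, condition (\textbf{C.2}) places us in the setting of Theorem \ref{th-decr}, again giving continuity on $[s_{i-1},s_i]$. Since (\textbf{A.1})--(\textbf{A.4}), Assumption \ref{assfreeb}, (\textbf{C.1}) and (\textbf{C.2}) are assumed to hold throughout, every subinterval of the partition is covered by exactly one of these three results.

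Finally I would patch the pieces. Continuity of $b$ on each closed subinterval $[s_{i-1},s_i]$ already gives continuity at every interior point as well as at the endpoints \emph{from within} that interval; in particular $b(s_i-)=b(s_i)$ from the subinterval on its left and $b(s_i+)=b(s_i)$ from the subinterval on its right. Hence both one-sided limits agree with $b(s_i)$ at each junction point $s_i$, and combined with continuity on the interiors this yields continuity of $b$ on all of $[0,T]$. The mathematical content is entirely contained in Theorems \ref{th-cont}--\ref{th-decr}; the only mild subtlety --- hardly an obstacle --- is the matching of one-sided limits at the monotonicity-transition points, which is immediate precisely because each of the three theorems is stated for \emph{closed} intervals rather than open ones.
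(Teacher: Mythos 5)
Your proof is correct and follows essentially the same route as the paper, which states the corollary as an immediate consequence of Theorems \ref{th-cont}--\ref{th-decr}: decompose $[0,T]$ into finitely many closed monotone subintervals, apply the appropriate theorem on each, and match one-sided limits at the junction points. The gluing step you flag is indeed sound precisely because each theorem gives continuity on a \emph{closed} interval, so at every junction the left limit (from the left subinterval) and the right limit (from the right subinterval) both equal the boundary value there.
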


The special case of a time-independent gain function may be treated separately. In fact, in that case conditions (\textbf{C.2}), (\textbf{C.3}) and (\textbf{C.4}) may be dropped and continuity is obtained in a very general setting. We prove this claim in the next Proposition, for completeness.
\begin{prop}\label{time-indep}
Assume (\textbf{A.1}) and that $G:\mathbb{R}\to\mathbb{R}$ is time independent and it meets (\textbf{A.2}). Assume also that the value function
\begin{align}\label{time-ind-V}
V(t,x):=\sup_{0\le\tau\le T-t}\EE_x\Big[G(X_\tau)\Big]
\end{align}
fulfils (\textbf{A.3}) and that there exists a free-boundary $\big\{b(t)$, $0\le t\le T\big\}$ such that Assumption \ref{assfreeb}, (\textbf{A.4}) and (\textbf{C.1}) hold. Then $t\mapsto b(t)$ is increasing and continuous on $[0,T]$.
\end{prop}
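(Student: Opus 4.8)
The plan is to exploit the time-independence of $G$ together with the time-homogeneity of $X$ to first deduce that $b$ is increasing, and then to re-run the contradiction scheme of Theorem~\ref{th-cont}, but with the local modulus of continuity (\textbf{C.3}) replaced by a soft consequence of the mere continuity of $V$ granted by (\textbf{A.3}). The key structural fact that makes the argument lighter than Theorems~\ref{th-cont}--\ref{cor-cont} is that $G_t\equiv0$.

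First I would establish monotonicity. Since $G$ does not depend on time and $X$ is time-homogeneous, for $t_1\le t_2$ the set of admissible stopping times for $V(t_1,x)$ (those bounded by $T-t_1$) contains the set for $V(t_2,x)$ (those bounded by $T-t_2$), so \eqref{time-ind-V} immediately gives $V(t_1,x)\ge V(t_2,x)$ for every $x$; that is, $t\mapsto V(t,x)$ is nonincreasing. Consequently $\{x:V(t_2,x)>G(x)\}\subseteq\{x:V(t_1,x)>G(x)\}$, which by Assumption~\ref{assfreeb} reads $(b(t_2),\infty)\subseteq(b(t_1),\infty)$ and forces $b(t_1)\le b(t_2)$. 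Hence $b$ is increasing, and since $\DD$ is closed it is also right-continuous by the standard argument (see, e.g.,~\cite{Jacka}).

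Next I would rule out left jumps by contradiction: suppose $b(t_0-)<b(t_0)$ for some $t_0\in(0,T)$, pick $b(t_0-)<x_1<x_2<b(t_0)$ and $t'\in(0,t_0)$, and set $\mathcal R:=(t',t_0)\times(x_1,x_2)$. Since $b$ is increasing, $b(s)\le b(t_0-)<x_1$ for every $s<t_0$, so $\mathcal R\subset\CC$ and $V$ is the classical $C^{1,2}$ solution of $V_t+\mathbb L_X V=0$ there; moreover $(t_0,y)\in\DD$ for $y\in[x_1,x_2]$, hence $V(t_0,y)=G(y)$ on that segment while $V\ge G$ throughout. The crucial observation is that (\textbf{A.3}) alone supplies the estimate needed in place of \eqref{Vcont}: setting
\[
\omega(s):=\sup_{y\in[x_1,x_2]}\big(V(s,y)-G(y)\big)=\sup_{y\in[x_1,x_2]}\big(V(s,y)-V(t_0,y)\big),
\]
uniform continuity of $V$ on the compact set $[t',t_0]\times[x_1,x_2]$ yields $0\le\omega(s)\to0$ as $s\uparrow t_0$, \emph{without} any quantitative rate.

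Finally I would reproduce the integral identity of Theorem~\ref{th-cont}. Multiplying $V_t+\mathbb L_X V=0$ by $\psi\in C^\infty_c([x_1,x_2])$ with $\psi\ge0$, $\int\psi=1$, and integrating over $(t,t_0)\times(x_1,x_2)$, the term $\int\!\int V_t\,\psi$ equals $\int(V(t_0,y)-V(t,y))\psi\,dy\le0$ since $V(t,\cdot)\ge G=V(t_0,\cdot)$ on $[x_1,x_2]$; hence the generator term is nonnegative and, after integrating by parts, $0\le\int_t^{t_0}\!\!\int_{x_1}^{x_2}V\,\mathbb L_X^{*}\psi\,dy\,ds$. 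Splitting $V=G+(V-G)$, the $G$-part contributes $\int(\mathbb L_X G)\,\psi\,dy\le-\ell_\eps$ by (\textbf{C.1}) (recall $G_t\equiv0$, so $H=\mathbb L_X G$), while $0\le V-G\le\omega(s)$ bounds the remainder by $\Gamma\,\omega(s)$ with $\Gamma:=\int_{x_1}^{x_2}(\mathbb L_X^{*}\psi)^{+}dy<\infty$. This gives $0\le-\ell_\eps(t_0-t)+\Gamma\int_t^{t_0}\omega(s)\,ds$, and dividing by $t_0-t$ and letting $t\uparrow t_0$ yields $0\le-\ell_\eps$, a contradiction; together with right-continuity this proves continuity. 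The main point to flag is that time-independence does the essential work twice, killing the $G_t$ term (so only (\textbf{C.1}) survives) and making $V$ monotone in $t$; the quantitative conditions (\textbf{C.3})/(\textbf{C.4}) then become superfluous, and the only place requiring genuine care is verifying that the soft bound $\frac{1}{t_0-t}\int_t^{t_0}\omega(s)\,ds\to0$ indeed beats the linear term $\ell_\eps(t_0-t)$.
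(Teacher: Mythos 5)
Your proof is correct, and its first half (monotonicity of $t\mapsto V(t,x)$ from the shrinking family of admissible stopping times, hence $b$ increasing and right-continuous) coincides with the paper's. For the continuity part the contradiction skeleton is also the same --- rectangle $(t',t_0)\times(x_1,x_2)$ in the gap, nonnegative $\psi\in C^\infty_c([x_1,x_2])$ with unit mass, the adjoint $\mathbb{L}_X^*$, and (\textbf{C.1}) with $H=\mathbb{L}_XG$ since $G_t\equiv0$ --- but the limiting argument is genuinely different. The paper never integrates in time: since $V\in C^{1,2}$ inside $\CC$ and $t\mapsto V(t,x)$ is decreasing, it uses $V_t\le0$ pointwise to get, for each fixed $s<t_0$,
\begin{align*}
0\ge \int_{x_1}^{x_2}V_t(s,y)\psi(y)\,dy=-\int_{x_1}^{x_2}V(s,y)\,\mathbb{L}_X^*\psi(y)\,dy,
\end{align*}
and then simply lets $s\uparrow t_0$, using continuity of $V$ and $V(t_0,\cdot)=G$ to arrive at $0\ge-\int\mathbb{L}_XG\,\psi\,dy\ge\ell>0$, an immediate contradiction. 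You instead keep the time-integrated scheme of Theorem \ref{th-cont} and replace the H\"older rate of (\textbf{C.3}) by the soft modulus $\omega(s)=\sup_{y\in[x_1,x_2]}\big(V(s,y)-G(y)\big)\to0$, closing with the Ces\`aro step $\frac{1}{t_0-t}\int_t^{t_0}\omega(s)\,ds\to0$; the splitting $V=G+(V-G)$ and the bound via $\Gamma=\int_{x_1}^{x_2}(\mathbb{L}_X^*\psi)^+dy$ are sound, so this works. The paper's route is leaner (no averaging lemma, no need to worry about regularity of $\omega$), exploiting monotonicity in differential rather than integrated form; yours has the merit of isolating exactly where (\textbf{C.3}) entered Theorem \ref{th-cont} and showing that qualitative continuity suffices once $G_t\equiv0$ and $V$ is time-monotone. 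Two small points to tighten: choose $x_1,x_2$ close enough to $b(t_0)$ (and $t'$ close to $t_0$) so that $\overline{\mathcal{R}}$ lies inside the ball $\mathcal{B}$ where (\textbf{A.4}) and (\textbf{C.1}) hold, since the jump $b(t_0)-b(t_0-)$ may exceed the radius $\eps$; and the contradiction should also be run at $t_0=T$ (the paper allows $t_0\in(0,T]$), where continuity of $V$ up to $\{T\}\times[x_1,x_2]$ with boundary value $G$ is used in the same implicit way as at interior times.
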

\begin{proof}
To show that $t\mapsto b(t)$ is monotone increasing we use standard arguments (cf.~\cite{Jacka}, for instance). Since $G$ does not depend on time, the mapping $t\mapsto V(t,x)$ is decreasing for any $x\in\mathbb{R}$. Take $(t_0,x_0)\in\DD$ and $t>t_0$, then $V(t,x_0)\le V(t_0,x_0)=G(x_0)$ and hence $(t,x_0)\in\DD$ for all $t>t_0$. Closedness of $\DD$ implies that $b(t)$ is also right-continuous.

To prove continuity, assume that there exists $t_0\in(0,T]$ such that $b(t_0-)<b(t_0)$; construct a rectangular domain $\mathcal{R}$ with parabolic boundary $\partial_P\mathcal{R}$ as in the proof of Theorem \ref{th-cont}, then $V$ is the (unique) classical solution of the Cauchy-Dirichlet problem
\begin{eqnarray}\label{cont-TI01}
\begin{array}{ll}
\vspace{+5pt}
u_t+\mathbb{L}_Xu=0 & \textrm{in $\mathcal{R}$}\\
\vspace{+5pt}
u=V & \textrm{on $\partial_P\mathcal{R}$}.
\end{array}
\end{eqnarray}

Take $\psi\ge0$ arbitrary in $C^{\infty}_c([x_1,x_2])$ with $\int^{x_2}_{x_1}{\psi(y)dy}=1$, multiply the first equation in \eqref{cont-TI01} (with $V$ instead of $u$) by $\psi$ and integrate over $[x_1,x_2]$. Then
\begin{align}\label{cont-TI02}
\int_{x_1}^{x_2}{{V}_t(s,y)\psi(y)dy}=-\int_{x_1}^{x_2}{\mathbb{L}_X{V}(s,y)\psi(y)dy}\qquad\textrm{for all $s\in(t^\prime,t_0)$}
\end{align}
and, integrating by parts the term on the right-hand side of \eqref{cont-TI02}, we obtain
\begin{align}\label{cont-TI03}
\int_{x_1}^{x_2}{{V}_t(s,y)\psi(y)dy}=-\int_{x_1}^{x_2}{{V}(s,y){\mathbb{L}_X}^*\psi(y)dy}
\qquad\textrm{for all $s\in(t^\prime,t_0)$}.
\end{align}

The left-hand side of \eqref{cont-TI03} is negative since $V$ is decreasing in time and $\psi\ge0$. Then, we take the limit as $s\uparrow t_0$ in the right-hand side of \eqref{cont-TI03} and use dominated convergence, continuity of $V$ and boundary condition $V(t_0,y)=G(y)$ to obtain
\begin{align}\label{cont-TI04}
0\ge-\int_{x_1}^{x_2}{{V}(t_0,y){\mathbb{L}_X}^*\psi(y)dy}=
-\int_{x_1}^{x_2}{G(y){\mathbb{L}_X}^*\psi(y)dy}=-\int_{x_1}^{x_2}{\mathbb{L}_XG(y)\psi(y)dy}.
\end{align}
\noindent There exists $\ell>0$ depending only on $\mathcal{R}$ such that $\mathbb{L}_XG(y)<-\ell$ in $[x_1,x_2]$, by \eqref{regG-bdry} and then we find the contradiction
\begin{align}\label{cont-TI05}
0\ge\ell\int_{x_1}^{x_2}{\psi(y)dy}=\ell>0
\end{align}
by \eqref{cont-TI04}.
\end{proof}

A basic example of a more general optimal stopping problem may be considered by taking
\begin{align}\label{gen01}
G(t,x):=g(t,x)I_{\{t<T\}}+h(x)I_{\{t=T\}}
\end{align}
in \eqref{Vfun} with $g$ and $h$ bounded and continuous. A probabilistic proof of the existence of an optimal stopping time in this setting may be found in \cite{PaSt10} and the continuation set is $\CC:=\big\{V>g\big\}$. Then, replacing assumptions on $G$ with analogous ones for $g$ one may use the same arguments as above to show that $b(t)$ as in Assumption \ref{assfreeb} is continuous on the open interval $(0,T)$. However, continuity at the maturity $T$ may break down and it should be studied again on a case by case basis. Note that if $b(t)$ is decreasing on $(t_1,T]$ for some $t_1<T$, then it is also continuous at $T$ since it is left-continuous on the interval. On the other hand, for an increasing boundary on $(t_1,T]$ one may easily check, proceeding as in the proofs of Theorems \ref{th-cont} and \ref{cor-cont}, that sufficient conditions for continuity at $T$ are: \emph{i)} $\mathbb{L}_Xh$ well defined and piecewise continuous, \emph{ii)} $\mathbb{L}_Xh<-\ell_\eps$ for $x\le b(T)$.

All results of this paper naturally extend to the case of discounted gain functions and in presence of running costs. In fact, if we take for instance a positive, continuous discount function $r(x)$ and a positive, continuous cost function $C(t,x)$ such that
\begin{align}\label{cost}
\EE_x\Big[\int^{T}_0C(s,X_s)ds\Big]<+\infty\quad\textrm{for $x\in\mathbb{R}$}
\end{align}
we may define the optimal stopping problem
\begin{align}\label{cost02}
V(t,x):=\sup_{0\le\tau\le T-t}\EE_x\Big[e^{-\int^\tau_0{r(X_s)ds}}G(t+\tau,X_\tau)-\int^\tau_0{e^{-\int_0^s{r(X_u)du}}C(t+s,X_s)ds}\Big].
\end{align}
If $(\textbf{A.2})$ and $(\textbf{A.3})$ hold then $\tau_*$ as in \eqref{taustar} is optimal. If we now modify conditions (\textbf{C.1}) and (\textbf{C.2}) by using $G_t+\mathbb{L}_XG-rG-C$ instead of $G_t+\mathbb{L}_XG$ and assume for instance that
\begin{align}\label{cost03}
\xi(t,x):=\EE_x\Big[\int^{T-t}_0\big|C(t+s,X_s)\big|^\delta ds\Big]
\end{align}
is locally bounded on $[0,T]\times\mathbb{R}$, for $\delta>1$ as in (\textbf{C.4}), all conclusions above remain true (this is the case for instance of \cite{Ch-Haus09}).

\vs{+10pt}
\noindent\textbf{Acknowledgments:} I wish to thank G.~Ferrari and G.~Peskir for useful discussions and comments.

\end{document}